\definecolor{darkred}{rgb}{1,0,0} 
\definecolor{darkgreen}{rgb}{0,0.8,0}
\definecolor{darkblue}{rgb}{0,0,1}
\def\reflb#1#2{\begingroup
    #2%
    \def\@currentlabel{#2}%
    \phantomsection\label{#1}\endgroup
}
\numberwithin{equation}{section}
\newtheorem {Theorem}{Theorem}
\numberwithin{Theorem}{section}
\newtheorem {Corollary}[Theorem]{Corollary}
\theoremstyle{definition}
\theoremstyle{remark}
\newtheorem{Remark}[Theorem]{Remark}
\newtheorem{Example}[Theorem]{Example}
\def    \eps    {\epsilon}
\newcommand{\FF}{{\mathcal F}}
\def    \R      {{\mathbb R}}
\def    \12    {{\frac{1}{2}}}
\def    \p      {\partial}
\def    \Lk  {\operatorname{Lk}}
\begin{document}


\setlength{\smallskipamount}{6pt}
\setlength{\medskipamount}{10pt}
\setlength{\bigskipamount}{16pt}





\title[Unique Ergodicity and the
  Contact Type condition]{A Remark on Unique Ergodicity and the
  Contact Type condition}

\author[Viktor Ginzburg]{Viktor L. Ginzburg}
\author[C\'esar Niche]{C\'esar J. Niche}

\address{VG: Department of Mathematics, UC Santa Cruz, Santa Cruz, CA
  95064, USA} \email{ginzburg@ucsc.edu}

\address{CN: 
Departamento de Matem\'atica Aplicada,
Universidade Federal do Rio de Janeiro,
Rio de Janeiro, RJ - CEP 21941-909,  Brasil} \email{cniche@im.ufrj.br}

\subjclass[2010]{53D40, 37J55} 

\keywords{Unique ergodicity, Reeb flows, Weinstein conjecture, almost
  existence theorem, twisted geodesic flows}

\date{\today} 

\thanks{The work is partially supported by NSF grant 
  DMS-1308501}

\bigskip

\begin{abstract}
We prove that for a broad class of exact symplectic manifolds including
$\R^{2m}$ the Hamiltonian flow on a regular compact
energy level of an autonomous Hamiltonian cannot be
uniquely ergodic. This is a consequence of the Weinstein
conjecture and an observation that a Hamiltonian structure with
non-vanishing self-linking number must have contact type. We apply
these results to show that certain types of exact twisted geodesic
flows cannot be uniquely ergodic.
\end{abstract}

\maketitle


\section{Results}
\label{sec:results}

\subsection{Introduction}
\label{sec:intro}

In this paper we show that the Hamiltonian flow on a regular compact
energy level of an autonomous Hamiltonian on $\R^{2m}$ cannot be
uniquely ergodic. In fact, the result holds for a much larger class of
symplectic manifolds. For instance, it is sufficient to assume that
every compact subset of the ambient manifold has finite
Hofer--Zehnder capacity.

To put these results in perspective, recall that by the so-called
almost existence theorem proved by Hofer and Zehnder, \cite{HZ}, and
by Struwe, \cite{St}, almost all, in the sense of measure theory,
regular energy levels of a proper Hamiltonian on $\R^{2m}$ carry
periodic orbits. (Again, this theorem holds for a much broader class
of ambient manifolds; see, e.g., \cite{HZ} and \cite{Gi:We} for a
survey of related results.)  On the other hand, it is also known that
periodic orbits need not exist on all regular energy levels. To be
more specific, when $2m\geq 4$, there exists a proper Hamiltonian
$H\colon \R^{2m}\to \R$ with only one critical point, which is thus a
minimum, carrying no periodic orbits of its Hamiltonian flow on a
regular level.  Hamiltonians with this property are known as
counterexamples to the Hamiltonian Seifert conjecture. (The
Hamiltonian $H$ is $C^\infty$ smooth when $2m>4$. However, when
$2m=4$, the Hamiltonian is only $C^2$ and it is not known if a
$C^\infty$-smooth $H$ exists.) We refer the reader to the survey
\cite{Gi:Seifert} and to \cite{GG:Seifert} for further references and
a more detailed discussion of the counterexamples to the Hamiltonian
Seifert conjecture.)

This naturally leads to the question of how far from having periodic
orbits can the flow on a regular level of $H$ can be. Unique
ergodicity is arguably the most extreme form of ``aperiodicity'', and
our results show that the flow on a regular level of a proper
Hamiltonian on $\R^{2m}$ cannot be so ``aperiodic''. This is also true
for many other, but not all, ambient symplectic manifolds; see
Corollary \ref{cor:ae-gen} and Example \ref{ex:horo}.

As an application of this approach, we show that in certain situations
exact twisted geodesic flows cannot be uniquely ergodic; see Corollary
\ref{cor:magnetic}.

The proof of these results is rather indirect, although it amounts to
a simple combination of well known facts. First, we use the contact
type criterion due to McDuff, \cite{McD}, to show that a uniquely
ergodic ``Hamiltonian structure'' meeting a certain natural
requirement must have contact type. This is our Theorem
\ref{thm:main}, which is also closely related to a result of Taubes
from \cite{Ta:UE}; see Corollary \ref{cor:Tau}.  The requirement is
that the self-linking number of the structure is non-zero and it is
automatically satisfied for a closed hypersurface bounding a domain in
an exact symplectic manifold. Next, we observe that whenever the
Weinstein conjecture is established for the resulting class of contact
type hypersurfaces, the Reeb flow has a closed characteristic, and
hence cannot be uniquely ergodic; see Corollaries \ref{cor:ae-r2n} and
\ref{cor:ae-gen}. Another way to interpret these results is that
certain natural, essentially topological, requirements on a
Hamiltonian structure imply a constraint on its dynamics, namely, that
the structure cannot be uniquely ergodic.

\subsection{Main Results}
\label{sec:results}
Before stating the main results of this note, let us introduce some
terminology.

Let $M^{2n+1}$ be a closed oriented manifold and $\omega$ be a
maximally non-degenerate (i.e., $\omega^n\neq 0$ anywhere on $M$)
exact two-form on $M$. For brevity, we will refer to $\omega$ as a
\emph{Hamiltonian structure}, cf.\ \cite{EKP}. (We emphasize that we
do not impose any further conditions, e.g., stability, on $\omega$.)
An example of a Hamiltonian structure is the restriction of a
symplectic form to a compact regular level of a Hamiltonian when the
symplectic form on the level is exact. Note that for a fixed volume
form $\mu$ on $M$ the non-vanishing vector field $X$ determined by the
condition $i_X\mu=\omega^n$ is necessarily volume
preserving. Conversely, in dimension three, once $M$ and $\mu$ are
fixed, $X\mapsto i_X\mu$ is a one-to-one correspondence between exact
divergence-free vector fields and Hamiltonian structures. (In higher
dimensions, a non-vanishing divergence-free vector field $X$ such that
$i_X\mu$ is exact need not come from a Hamiltonian structure.)  In any
event, a Hamiltonian structure gives rise to the line field
$\ker \omega$ which integrates to a transversely symplectic
one-dimensional foliation $\FF$ called the \emph{characteristic
  foliation} of $\omega$. The leaves of $\FF$ are the integral curves
of $X$. Recall that a flow is said to be \emph{uniquely ergodic} if it
admits only one ergodic measure. For volume preserving flows this must
then be the volume form. The flow is \emph{minimal} if its every orbit
is dense. (See, e.g., \cite{KH} for further details.)  We call
$\omega$ uniquely ergodic (minimal) when $\FF$, i.e., the flow of $X$,
is uniquely ergodic (minimal). Finally, we say that $\omega$ has
\emph{contact type} when $\omega$ has a contact primitive, i.e., there
exists a one-form $\alpha$ such that $d\alpha=\omega$ and
$\alpha\wedge(d\alpha)^n\neq 0$ everywhere.

To a Hamiltonian structure $\omega$ we associate a number 
$$
\Lk(\omega)=\int_M\alpha\wedge\omega^n,
$$
where $\alpha$ is a primitive of $\omega$. Since $M$ is closed,
$\Lk(\omega)$ is well defined, i.e., independent of $\alpha$, by the
Stokes' formula. (The sign of $\Lk(\omega)$ depends on the orientation
of $M$.) Clearly, $\Lk(\omega)\neq 0$ when $\omega$ has contact
type. In dimension three, $\Lk(\omega)$ can be interpreted as the
self-linking number of $\FF$ or the asymptotic Hopf invariant;
see~\cite{Ar:ah}.

Our key technical result is the following theorem.

\begin{Theorem}
\label{thm:main}
Let $\omega$ be a uniquely ergodic Hamiltonian structure with
$\Lk(\omega)\neq 0$. Then $\omega$ has contact type.
\end{Theorem}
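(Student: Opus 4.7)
The plan is to produce a contact primitive directly by time-averaging an arbitrary primitive of $\omega$ along the characteristic flow. Unique ergodicity, via the uniform version of the Birkhoff ergodic theorem, will force the average to converge uniformly to a nonzero constant proportional to $\Lk(\omega)$, which is precisely the pointwise positivity required for the contact condition. In effect this is McDuff's criterion \cite{McD} made explicit in the uniquely ergodic setting, where the cohomological equation underlying the criterion admits a clean solution by averaging.

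To set this up, I would fix an auxiliary volume form $\mu$ on $M$ and define the characteristic vector field $X$ by $i_X\mu=\omega^n$. Then $X$ is nowhere vanishing, spans $\ker\omega$, preserves $\mu$ (since $L_X\mu=d\,i_X\mu=d\omega^n=0$), and preserves $\omega$ (since $i_X\omega=0$ and $d\omega=0$). The pointwise identity $\alpha\wedge\omega^n=\alpha(X)\,\mu$, applied to any primitive $\alpha$, gives $\int_M\alpha(X)\,\mu=\Lk(\omega)\neq 0$. The main construction is then to take, for a fixed primitive $\alpha_0$ and the flow $\phi_t$ of $X$,
$$
\alpha_T=\frac{1}{T}\int_0^T \phi_t^*\alpha_0\,dt.
$$
Because $\phi_t^*\omega=\omega$, passing $d$ through the integral gives $d\alpha_T=\omega$, so $\alpha_T$ is still a smooth primitive, and because $d\phi_t\cdot X(x)=X(\phi_t x)$ the contraction with $X$ reduces to the Birkhoff average
$$
\alpha_T(X)(x)=\frac{1}{T}\int_0^T \alpha_0(X)(\phi_tx)\,dt.
$$

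By the uniform Birkhoff theorem for uniquely ergodic continuous flows on compact spaces, this average converges to the constant $\Lk(\omega)/\mu(M)$ uniformly in $x$, and after reversing orientation if necessary we may take this constant to be positive. Hence for $T$ sufficiently large, $\alpha_T(X)>0$ everywhere on $M$, so $\alpha_T\wedge(d\alpha_T)^n=\alpha_T(X)\,\mu>0$ and $\alpha_T$ is the desired contact primitive. The only genuine input is the uniform ergodic theorem in the uniquely ergodic setting; the remaining pieces—smoothness of $\alpha_T$, the identity $d\alpha_T=\omega$, and the passage from pointwise positivity to the contact condition—are routine. The step I would expect to require the most care, if any, is verifying that the time-averaged one-form is smooth with $d\alpha_T=\omega$, but this is immediate from smoothness of the flow and Fubini, so I do not anticipate a substantive obstacle.
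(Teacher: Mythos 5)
Your proof is correct, but it takes a genuinely different route from the paper. The paper does not construct a primitive at all: it invokes McDuff's contact type criterion \cite{McD}, built on Sullivan's theory of structure currents \cite{Su}, observing that under unique ergodicity the only candidate structure boundary is $X\otimes\mu$, that it is indeed a structure boundary (a Stokes computation), and that its action equals $\pm\Lk(\omega)\neq 0$, so the criterion applies. You instead make the criterion explicit in this special case: you flow-average a fixed primitive $\alpha_0$, use $\phi_t^*\omega=\omega$ to keep $d\alpha_T=\omega$, identify $\alpha_T(X)$ with the Birkhoff average of $\alpha_0(X)$, and then use the uniform (Oxtoby-type) ergodic theorem for uniquely ergodic flows on compact spaces to make $\alpha_T(X)$ of one sign for large $T$, so that $\alpha_T\wedge(d\alpha_T)^n=\alpha_T(X)\,\mu$ is nowhere zero. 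All the intermediate identities you use ($i_X\omega=0$, $L_X\mu=0$, $L_X\omega=0$, $\alpha\wedge\omega^n=\alpha(X)\mu$, $(\phi_t)_*X=X$) are standard and correct, and compactness of $M$ gives completeness of the flow. What your argument buys is a self-contained, constructive proof producing an explicit contact primitive, with the only dynamical input being the uniform ergodic theorem; what the paper's argument buys is brevity and a framework (structure cycles/boundaries) that is the natural generality behind the statement, since McDuff's criterion handles arbitrary invariant measures rather than just the uniquely ergodic case. One cosmetic point: your ``reverse the orientation if necessary'' step is unnecessary, since the contact condition as defined in the paper only requires $\alpha_T\wedge(d\alpha_T)^n$ to be nowhere vanishing, which holds whether the limiting constant $\Lk(\omega)/\mu(M)$ is positive or negative.
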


The proof of this theorem, given in detail in Section \ref{sec:proof},
is an easy application of the contact type criterion from
\cite{McD}.  The theorem is essentially a negative result saying that
a Hamiltonian structure with $\Lk(\omega)\neq 0$ can not be uniquely
ergodic if we assume the Weinstein conjecture to hold. Indeed, the Weinstein
conjecture asserts that the characteristic foliation of a contact type
Hamiltonian structure has a periodic orbit, and hence cannot be
uniquely ergodic. (See, e.g., \cite{Hu} for a detailed
discussion). Thus this is really so in the cases where the Weinstein
conjecture has been established, e.g., when $M$ is three-dimensional,
\cite{Ta:We}, or when $M$ is a hypersurface in $\R^{2m}$, \cite{Vi:We},
or even more generally $M$ is a displaceable hypersurface in a
sufficiently nice symplectic manifold; see, e.g., \cite{Gi:We,Hu,HZ}
for further references. Moreover, in some instances, the condition
that $\Lk(\omega)\neq 0$ is satisfied automatically. For example, we
have

\begin{Corollary}
\label{cor:ae-r2n}
Let $M$ be a connected closed hypersurface in $\R^{2m}$. Then the characteristic
foliation on $M$ cannot be uniquely ergodic. Equivalently, an autonomous
Hamiltonian flow on a connected regular level of a proper Hamiltonian on
$\R^{2m}$ cannot be uniquely ergodic.
\end{Corollary}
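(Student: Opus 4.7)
The plan is to derive Corollary \ref{cor:ae-r2n} directly from Theorem \ref{thm:main} by verifying the hypothesis $\Lk(\omega)\neq 0$ on every closed connected hypersurface in $\R^{2m}$ and then invoking Viterbo's theorem on the Weinstein conjecture in $\R^{2m}$.

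First I would set up the Hamiltonian structure. Since $M$ is a closed connected hypersurface in $\R^{2m}$, by Jordan--Brouwer it bounds a compact domain $D\subset\R^{2m}$ with $\p D=M$. Let $\lambda$ be any Liouville primitive of the standard form $\omega_0$ on $\R^{2m}$ (e.g.\ $\lambda=\tfrac12\sum(p_i\,dq_i-q_i\,dp_i)$), and set $\omega:=\omega_0|_M$ and $\alpha:=\lambda|_M$. Because $\omega_0$ has corank exactly one along $M$, the two-form $\omega$ is maximally non-degenerate and exact, so it is a Hamiltonian structure in the sense of the paper. With $\dim M=2n+1=2m-1$, Stokes' theorem gives
$$
\Lk(\omega)=\int_M\alpha\wedge\omega^{n}=\int_M\lambda\wedge\omega_0^{m-1}=\int_D\omega_0^{m}=m!\,\vol(D)>0.
$$

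Now I would argue by contradiction. Suppose the characteristic foliation $\FF$ on $M$ is uniquely ergodic. Since $\Lk(\omega)\neq 0$, Theorem \ref{thm:main} produces a contact primitive for $\omega$, i.e., $M$ is a contact-type hypersurface in $\R^{2m}$. Viterbo's theorem \cite{Vi:We} then yields a closed characteristic $\gamma\subset M$. The normalized length measure on $\gamma$ is an $X$-invariant ergodic Borel probability measure supported on a proper subset of $M$ (proper because $\dim M=2m-1\geq 3$), so it differs from the ambient volume measure. This contradicts unique ergodicity and proves the first assertion. The equivalent formulation in terms of a proper autonomous Hamiltonian is standard: a compact regular level of such a Hamiltonian is a closed hypersurface, and conversely any closed hypersurface is a regular level of some proper Hamiltonian (e.g.\ an extension of the signed distance to $M$).

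I do not expect any serious obstacle: the only non-routine ingredients, Theorem \ref{thm:main} and the Weinstein conjecture in $\R^{2m}$, are already available, and the only content specific to this corollary is the Stokes computation showing $\Lk(\omega)>0$. The mildest point to be careful about is that the closed orbit $\gamma$ carries a measure distinct from the volume, which is automatic once $\dim M\geq 3$; this is why the statement presupposes $m\geq 2$.
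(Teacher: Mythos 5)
Your proposal is correct and follows essentially the same route as the paper: compute $\Lk(\omega)=\int_W\omega^m>0$ by Stokes' formula for the bounded domain $W$, apply Theorem \ref{thm:main} to conclude contact type under the unique ergodicity assumption, and invoke Viterbo's proof of the Weinstein conjecture in $\R^{2m}$ to obtain a closed characteristic contradicting unique ergodicity. The extra details you supply (the Jordan--Brouwer step, the explicit observation that the orbit measure differs from the volume measure when $\dim M\geq 3$, and the equivalence with the Hamiltonian-level formulation) are points the paper leaves implicit, but they do not change the argument.
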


\begin{proof} Let $W$ be the domain bounded by $M$ in $\R^{2m}$ and
  let $M=\p W$ be oriented by the outward normal. Let us
  also denote by $\omega$ the standard symplectic form on $\R^{2m}$.
  The result is then an immediate consequence of Theorem \ref{thm:main},
  the Weinstein conjecture for hypersurfaces in $\R^{2m}$ mentioned
  above, and  the equality
$$
\Lk(\omega) =\int_W\omega^m>0,
$$
which in turn readily follows from the Stokes' formula.
\end{proof}

This simple observation deserves a further discussion. First, we note
for future reference that a much more general result holds.

\begin{Corollary}
\label{cor:ae-gen}
Let $M$ be a connected closed hypersurface in an exact geometrically
bounded symplectic manifold $V$ bounding a domain $W$ with finite
Hofer--Zehnder capacity. Then the characteristic foliation on $M$
cannot be uniquely ergodic. In particular, if every compact set in $V$
has finite Hofer--Zehnder capacity (e.g., is displaceable), an autonomous
Hamiltonian flow on a connected regular level of a proper, bounded
from below Hamiltonian on $V$ cannot be uniquely ergodic.
\end{Corollary}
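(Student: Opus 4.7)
The strategy exactly parallels the proof of Corollary~\ref{cor:ae-r2n}: show $\Lk(\omega|_M)\neq 0$ by Stokes' formula, invoke Theorem~\ref{thm:main} to upgrade unique ergodicity to contact type, and then apply the appropriate generalization of the Weinstein conjecture to produce a closed characteristic, contradicting unique ergodicity.

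For the first step, exactness of $V$ yields a global primitive $\lambda$ of the ambient symplectic form $\omega_V$, so $\alpha:=\lambda|_M$ is a primitive of the induced Hamiltonian structure $\omega:=\omega_V|_M$. With $M=\p W$ oriented by the outward normal and $\dim M=2n+1$, Stokes' formula gives
$$
\Lk(\omega)\;=\;\int_M \alpha\wedge\omega^n\;=\;\int_W\omega_V^{n+1}\;>\;0,
$$
positivity being automatic because $\omega_V$ is symplectic on $W$. If the characteristic foliation on $M$ were uniquely ergodic, Theorem~\ref{thm:main} would then force $\omega$ to be of contact type. Next I would cite the standard extension of the Weinstein conjecture --- that a contact type hypersurface bounding a domain of finite Hofer--Zehnder capacity in a geometrically bounded exact symplectic manifold carries a closed characteristic (see \cite{HZ,Gi:We,Hu}) --- to produce a periodic orbit of the characteristic flow on $M$. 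This contradicts unique ergodicity, since the uniform measure on such an orbit and the invariant volume measure are then two distinct ergodic probability measures.

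For the ``in particular'' clause, take a regular value $c$ of a proper, bounded-from-below Hamiltonian $H:V\to\R$ with $H^{-1}(c)=M$ connected. The sublevel set $W=H^{-1}((-\infty,c])$ is compact, hence has finite Hofer--Zehnder capacity by hypothesis, so the first part of the corollary applies. Since on $M$ the Hamiltonian flow and the characteristic flow differ only by a positive reparametrization, they share the same invariant, and hence the same ergodic, probability measures, and the conclusion transfers.

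The one genuinely non-routine ingredient is the passage from finite Hofer--Zehnder capacity of $W$ to an actual closed characteristic on $M$. The almost existence theorem alone only supplies closed orbits on almost every level of a defining Hamiltonian near $M$; it is the contact type hypothesis --- via a Liouville collar $M\times(-\epsilon,\epsilon)$ on which the characteristic flows on parallel levels are mutually conjugate up to positive reparametrization --- that promotes a nearby closed orbit into one on $M$ itself. This is the step where the combination of Theorem~\ref{thm:main} with the finite HZ-capacity hypothesis actually does the work; the Stokes computation, the measure-theoretic incompatibility with a periodic orbit, and the reduction of the ``in particular'' clause to the first part are all essentially formal.
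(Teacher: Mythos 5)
Your proposal is correct and follows essentially the same route as the paper, which simply declares the proof ``identical to the proof of Corollary \ref{cor:ae-r2n}'' with the Weinstein conjecture now supplied by the finite Hofer--Zehnder capacity of $W$ (citing \cite{HZ}): Stokes gives $\Lk(\omega)\neq 0$, Theorem \ref{thm:main} gives contact type, and the resulting closed characteristic contradicts unique ergodicity. Your additional remarks --- the Liouville-collar mechanism behind the capacity-to-closed-characteristic step and the reduction of the ``in particular'' clause via compactness of the sublevel set and reparametrization-invariance of the ergodic measures --- are accurate elaborations of details the paper leaves implicit.
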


We refer the reader to, e.g., \cite{Gi:We,HZ} for the definitions and
notions used in the corollary, noting here only that the conditions on
$V$ and $W$ are automatically satisfied when $V$ is a subcritical
symplectic Stein manifold. The proof of this corollary is identical to
the proof of Corollary \ref{cor:ae-r2n}, but now we use the fact that
the Weinstein conjecture holds for $M$ since $W$ has finite
Hofer--Zehnder capacity; see \cite{HZ}. The corollary can be further
generalized; however, this variant is more than sufficient for our
purposes.

Next, recall that a uniquely ergodic volume-preserving flow is
automatically minimal; see, e.g., \cite{KH}. The converse is not true;
see \cite{Fu} and also \cite{KH} for further details. (However, the
authors are not aware of any example of a uniquely ergodic, but not
minimal, Hamiltonian structure $\omega$ with $\Lk(\omega)\neq 0$.) It
is tempting to conjecture that in the setting of Theorem
\ref{thm:main}, or at least in the context of Corollaries
\ref{cor:ae-r2n} and \ref{cor:ae-gen}, the Hamiltonian structure
cannot be minimal. This would be a much deeper and more difficult
result than Theorem \ref{thm:main}. For hypersurfaces in $\R^4$, a
proof of this fact was recently announced by Fish and Hofer,
\cite{FH}.

Turning to another application of Theorem \ref{thm:main}, we have

\begin{Corollary}[Taubes, \cite {Ta:UE}]
\label{cor:Tau}
Let $\omega$ be a Hamiltonian structure on a closed (oriented) 3-manifold
$M$ with $\Lk(\omega)\neq 0$. Then $\omega$ is not uniquely ergodic.
\end{Corollary}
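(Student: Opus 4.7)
The plan is to derive this corollary as an immediate combination of Theorem~\ref{thm:main} with Taubes' proof of the three-dimensional Weinstein conjecture. Specifically, I would argue by contradiction: suppose $\omega$ is uniquely ergodic. Since $\Lk(\omega)\neq 0$, Theorem~\ref{thm:main} applies and forces $\omega$ to admit a contact primitive $\alpha$, i.e.\ a one-form with $d\alpha=\omega$ and $\alpha\wedge d\alpha\neq 0$ everywhere on $M$.

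Next, I would invoke Taubes' theorem (\cite{Ta:We}) establishing the Weinstein conjecture on closed oriented three-manifolds: the Reeb vector field $R$ associated to the contact form $\alpha$ carries a closed orbit $\gamma$. Since $R$ is a non-vanishing reparametrization of the characteristic vector field $X$ of $\omega$, the leaf of $\FF$ through a point of $\gamma$ is a closed loop.

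Finally, I would observe that the existence of such a closed characteristic is incompatible with unique ergodicity: the normalized arc-length measure supported on $\gamma$ is an invariant probability measure for the flow of $X$, and any ergodic component of it is an ergodic invariant measure distinct from the (normalized) volume form $\mu/\vol(M,\mu)$ coming from $\omega^n$. This contradicts the assumed unique ergodicity of $\FF$, completing the proof.

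There is no real obstacle to overcome here: once Theorem~\ref{thm:main} is in place, the corollary is a formal consequence of it, Taubes' resolution of the three-dimensional Weinstein conjecture, and the trivial remark that a flow with a periodic orbit and a non-atomic invariant measure cannot be uniquely ergodic. The only point worth emphasizing is that in dimension three no auxiliary hypothesis on $M$ (such as being a hypersurface in an exact symplectic manifold) is needed, because Taubes' theorem holds for arbitrary closed oriented three-manifolds.
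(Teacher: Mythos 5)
Your argument is exactly the paper's: deduce contact type from Theorem~\ref{thm:main} using $\Lk(\omega)\neq 0$, then invoke Taubes' resolution of the three-dimensional Weinstein conjecture to produce a closed characteristic, which is incompatible with unique ergodicity. The paper gives precisely this one-line derivation (emphasizing, as you do, that only the statement of the Weinstein conjecture is needed, not Taubes' Seiberg--Witten argument as in the original reference), so the proposal is correct and follows the same route.
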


This corollary is an immediate consequence of Theorem \ref{thm:main}
and, on the non-trivial side, the Weinstein conjecture for 3-manifolds
proved by Taubes in \cite{Ta:We}; see also \cite{Hu} for a survey of
related results. Here the new point is that to establish the corollary
we use the assertion of the Weinstein conjecture rather than its proof
as in \cite{Ta:UE}.

\subsection{Applications and Examples: Twisted Geodesic Flows}
\label{sec:magn}
Let $\sigma$ be a closed 2-form (a \emph{magnetic field}) on a closed
Riemannian manifold $B$. We equip $T^*B$ with the twisted symplectic
structure $\omega=\omega_0+\pi^*\sigma$, where $\omega_0$ is the
standard symplectic form on $T^*B$ and $\pi\colon T^*M\to B$ is the
natural projection, and let $K$ be the standard kinetic energy
Hamiltonian on $T^*B$ arising from the Riemannian metric on $B$.  The
Hamiltonian flow of $K$ on $T^*B$ governs the motion of a charge on
$B$ in the magnetic field $\sigma$ and is referred to as a
\emph{twisted geodesic flow}. In contrast with the geodesic flow (the
case $\sigma=0$), the dynamics of a twisted geodesic flow on an energy
level $M_\eps=\{K=\eps^2/2\}$ depends on the level.

\begin{Example}[Horocycle flow] 
\label{ex:horo}
Let $B$ be a closed surface equipped
  with a metric of constant negative curvature $-1$ and let $\sigma$
  be the area form on $B$. Note that the restriction
  $\omega|_{M_\eps}$ is exact for every $\eps>0$ although the form
  $\omega$ is not exact on $T^*B$.  When $0<\eps<1$, every orbit of
  the Hamiltonian flow on $M_\eps$ is closed and all orbits have the
  same period. When $\eps>1$, the flow on $M$ is smoothly conjugate to
  the geodesic flow, up to a time change. The flow on $M_1$ is the
  so-called horocycle flow. (This observation goes back to
  \cite{Ar61}.)  The horocycle flow is known to be uniquely ergodic,
  \cite{Fu:UE}, and, as is easy to see, $\Lk(\omega|_{M_1})=0$, which
  shows that the conditions that $\Lk(\omega)\neq 0$ in Theorem
  \ref{thm:main} and that $V$ is exact in Corollary \ref{cor:ae-gen}
  are essential.
\end{Example}

\begin{Corollary}
\label{cor:magnetic}
Assume that the form $\sigma$ is exact. Then, for $\eps>0$
sufficiently small, the Hamiltonian flow on $M_\eps$ cannot be uniquely
ergodic.
\end{Corollary}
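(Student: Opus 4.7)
The plan is to apply Corollary~\ref{cor:ae-gen} to the hypersurface $M_\epsilon$ sitting inside $V := T^*B$, equipped with the twisted symplectic form $\omega = \omega_0 + \pi^*\sigma$, which bounds the compact domain $W_\epsilon := \{K \leq \epsilon^2/2\}$. Doing so reduces the statement to verifying three items: that $\omega$ is exact, that $(V,\omega)$ is geometrically bounded, and that $W_\epsilon$ has finite Hofer--Zehnder capacity for $\epsilon$ sufficiently small.

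Exactness of $\omega$ is immediate from the hypothesis: writing $\sigma = d\theta$, one has $\omega = d(\lambda_0 + \pi^*\theta)$, where $\lambda_0$ denotes the canonical one-form on $T^*B$. Geometric boundedness of $(T^*B,\omega)$ for $B$ closed is standard --- the almost complex structure $J_0$ and the Sasaki metric give bounded geometry for the standard $(T^*B,\omega_0)$, and the perturbation $\pi^*\sigma$, being the pull-back of a bounded form from the compact base, preserves these properties (up to modifying $J_0$ on a compact neighborhood of the zero section, if needed).

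The main step is the capacity bound. I introduce the fiber-translation diffeomorphism
\[
F \colon T^*B \to T^*B, \qquad F(q,p) = (q,\, p + \theta_q).
\]
A direct computation gives $F^*\lambda_0 = \lambda_0 + \pi^*\theta$, hence $F^*\omega_0 = \omega$, so that $F$ is a symplectomorphism $(T^*B,\omega) \to (T^*B,\omega_0)$. Under $F$, the domain $W_\epsilon$ is carried to the tube $\{(q,p) : |p - \theta_q| \leq \epsilon\}$ around the graph of $\theta$ in the standard cotangent bundle. For small $\epsilon$, this tube is contained in the bounded disc subbundle $D^*_R B := \{|p| \leq R\}$ with $R := \|\theta\|_\infty + 1$. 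By the almost-existence theorem for bounded subsets of standard cotangent bundles of closed manifolds (cf.\ \cite{HZ,Gi:We} and the references therein), $\CHZ(D^*_R B, \omega_0) < \infty$. Monotonicity of $\CHZ$ under inclusion together with its symplectic invariance then yields
\[
\CHZ(W_\epsilon, \omega) \;=\; \CHZ(F(W_\epsilon), \omega_0) \;\leq\; \CHZ(D^*_R B, \omega_0) \;<\; \infty.
\]
Corollary~\ref{cor:ae-gen} now applies and forbids unique ergodicity of the characteristic foliation of $\omega|_{M_\epsilon}$, which (up to reparametrization) is precisely the twisted geodesic flow on $M_\epsilon$.

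The single substantial input is the finiteness of the Hofer--Zehnder capacity on bounded subsets of the standard cotangent bundle of a closed manifold; this is a classical if non-trivial consequence of the almost-existence theorem in that setting. Once that ingredient is in hand, the role of the exactness hypothesis on $\sigma$ is transparent: it supplies the gauge symplectomorphism $F$, which transports the twisted energy level to a bounded piece of the standard cotangent bundle, and the exactness of $\omega$ needed to invoke Corollary~\ref{cor:ae-gen} comes for free from the same primitive $\theta$.
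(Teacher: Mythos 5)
Your overall reduction is the same as the paper's: verify that $(T^*B,\omega)$ is exact and geometrically bounded and that $W_\eps$ has finite Hofer--Zehnder capacity, then invoke Corollary \ref{cor:ae-gen}. The paper finishes in one line by quoting Schlenk's theorem \cite{Sc}, which asserts precisely that a sufficiently small neighborhood of the zero section in the twisted cotangent bundle has finite Hofer--Zehnder capacity; this is exactly why the statement is restricted to small $\eps$. Your exactness and geometric boundedness checks are fine, as is the computation $F^*\lambda_0=\lambda_0+\pi^*\theta$, $F^*\omega_0=\omega$.

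The gap is in the capacity step. After the gauge symplectomorphism $F$ you need $\CHZ(D^*_RB,\omega_0)<\infty$ for a disc bundle of the \emph{fixed} radius $R=\|\theta\|_\infty+1$ in the \emph{standard} cotangent bundle of an arbitrary closed manifold $B$. This is not contained in \cite{HZ} or \cite{Gi:We}: those references give capacity finiteness (hence almost existence) for bounded subsets of $\R^{2m}$ and for displaceable or stably displaceable sets, but finiteness of $\CHZ(D^*B,\omega_0)$ for a general closed base is a genuinely hard statement -- in this generality it is essentially Irie's finiteness theorem for unit disc cotangent bundles (J.\ Eur.\ Math.\ Soc., 2014), which is far deeper than anything the corollary requires. (Also, the logic ``finiteness of $\CHZ$ is a consequence of almost existence'' runs backwards: almost existence is deduced from capacity finiteness, not conversely.) Note the self-defeating feature of the gauge trick: it discards the smallness of $\eps$. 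The set $F(W_\eps)$ is an $\eps$-neighborhood of the graph of $\theta$, not of the zero section, and $\theta$ may be large; moreover, since fiberwise dilation merely rescales $\omega_0$, no disc bundle in the standard structure is ``small'' in any useful sense -- finiteness for one radius is equivalent to finiteness for the unit disc bundle. The paper avoids this by staying in the twisted picture, where $W_\eps$ really is a small neighborhood of the zero section and Schlenk's theorem (cf.\ also \cite{Gu}, using $\omega|_B=\sigma$) applies directly. Your argument becomes correct either by dropping $F$ and citing \cite{Sc} for the twisted neighborhood, as the paper does, or by replacing the appeal to \cite{HZ,Gi:We} with Irie's theorem.
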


This corollary is an immediate consequence of Corollary
\ref{cor:ae-gen} and a theorem of Schlenk, \cite{Sc}, asserting that a small
neighborhood of the zero section in $T^*B$ has finite Hofer-Zehnder
capacity.

\begin{Remark}
  Corollary \ref{cor:magnetic} can also be generalized in a variety of
  ways. For instance, one can replace the level $M_\eps$ of $K$ by any
  closed hypersurface in a sufficiently small neighborhood of the zero
  section. Furthermore, $T^*B$ can be replaced by any exact
  geometrically bounded symplectic manifold, meeting some minor
  additional requirements, and the zero section can be replaced by any
  submanifold $B$ such that $\omega|_B\neq 0$, cf.\ \cite{Gu}.
\end{Remark}

\section{Proof of Theorem \ref{thm:main}}
\label{sec:proof}
As has been mentioned in Section \ref{sec:results}, the theorem readily
follows from McDuff's contact type criterion, \cite{McD}, based, in
turn, on a work of Sullivan, \cite{Su}. To state this criterion, we
need first to introduce several notions.

Fix a volume form $\mu$ on $M$ and let $X$ be the vector field
uniquely determined by the condition $i_X\mu=\omega^n$. The integral
curves of the flow of $X$ are parametrized characteristics of
$\omega$.  Consider the currents of the form $X\otimes \nu$, where
$\nu$ is an invariant measure on $M$, acting on a 1-form $\alpha$ as
$$
\left<X\otimes \nu,\alpha\right>:=\int_M\alpha(X)\,d\nu,
$$
By definition, such a current is a
\emph{structure boundary} if
$$
\left<X\otimes \nu,\beta\right>=0
$$
whenever $\beta$ is closed; see \cite{McD,Su}. For instance, a
contractible periodic orbit of $X$, i.e., a closed characteristic of
$\omega$, gives rise to a structure boundary which is simply the
integral over the orbit. 

Let $\alpha$ be a primitive of $\omega$. To a structure boundary
we can associate its ``action'' 
$$
A(X\otimes \nu):=\left<X\otimes \nu,\alpha\right>,
$$
which is clearly independent of the choice of $\alpha$. McDuff's
contact type criterion asserts that $\omega$ has contact type if and
only if
$$
A(X\otimes \nu)\neq 0
$$
for all structure boundaries $X\otimes \nu$; see \cite{McD}. (The
observation that the actions on all contractible closed
characteristics must have the same sign for a closed contact type
hypersurface goes back to \cite{We}, where it is used to construct
hypersurfaces in $\R^{2m}$ which do not have contact type.)

In the setting of the theorem, $\mu$ is the only invariant measure
since $\omega$ (and hence $X$) are uniquely ergodic. Thus $X\otimes
\mu$ is the only candidate for a structure boundary and, in fact, it
is a structure boundary. Indeed, assume that $d\beta=0$. Then, since
$i_X\mu=\omega^n$ and $d\alpha=\omega$, we have
\begin{eqnarray*}
\left<X\otimes \mu,\beta\right> &=&\int_M\beta(X)\,d\mu\\
&=&-\int_M\beta\wedge(d\alpha)^n\\
&=&\int_Md[\beta\wedge\alpha\wedge(d\alpha)^{n-1}]\\
&=&0
\end{eqnarray*}
by Stokes' formula. (Alternatively, one can argue that $X\otimes \mu$
must be a structure boundary since structure boundaries always exists; see
\cite{Su}.)

In a similar vein, we have
$$
A(X\otimes
\mu)=\int_M\alpha(X)\,d\mu=-\int_M\alpha\wedge\omega^n=-\Lk(\omega)\neq 0
$$
by the hypotheses of the theorem. Hence, the conditions of McDuff's
criterion are met and $(M,\omega)$ has contact type.
\hfill\qed

\medskip
\noindent{\bf Acknowledgements.} The authors are grateful to Ba\c sak
G\"urel for useful discussions.  A part of this work was carried out
while the first author was visiting the ICMAT, Madrid, and he would
like to thank the ICMAT for its warm hospitality and support.


\begin{thebibliography}{CKRTZ}

\bibitem[Ar61]{Ar61} V.I. Arnold,
Some remarks on flows of line elements and frames, (Russian)
\emph{Dokl.\ Akad.\ Nauk SSSR}, \textbf{138} (1961), 255--257. 

\bibitem[Ar86]{Ar:ah} V.I. Arnold, The asymptotic Hopf invariant and its
  applications, \emph{Selecta Math.\ Soviet.}, \textbf{5} (1986),
327--345.

\bibitem[EKP]{EKP} Ya. Eliashberg, S.S. Kim, L. Polterovich, Geometry
  of contact transformations and domains: orderability versus
  squeezing, \emph{Geom.\ Topol.}, \textbf{10} (2006), 1635--1747.



\bibitem[FH]{FH}
J. Fish, H. Hofer, work in progress.


\bibitem[Fu61]{Fu}
H. Furstenberg,
Strict ergodicity and transformation of the torus, 
\emph{Amer.\ J. Math.}, \textbf{83} (1961), 573--601. 

\bibitem[Fu73]{Fu:UE} H. Furstenberg, The unique ergodicity of the
  horocycle flow, in \emph{Recent advances in topological dynamics
  (Proc.\ Conf., Yale Univ., New Haven, Conn., 1972; in honor of Gustav
  Arnold Hedlund)}, pp.\ 95--115. Lecture Notes in Math., Vol.\ 318,
  Springer, Berlin, 1973.

\bibitem[Gi99]{Gi:Seifert} 
V.L. Ginzburg, Hamiltonian dynamical
  systems without periodic orbits, in \emph{Northern California
    Symplectic Geometry Seminar}, 35--48,
    Amer.\ Math.\ Soc.\ Transl.\ Ser.\ 2, 196, Amer.\ Math.\ Soc.,
    Providence, RI, 1999.

\bibitem[Gi05]{Gi:We}
V.L. Ginzburg, 
The Weinstein conjecture and theorems of nearby and almost existence,
in \emph{The Breadth of Symplectic and Poisson Geometry}, 139--172, 
Progr.\ Math., \textbf{232}, Birkh\"auser Boston, Boston, MA, 2005. 


\bibitem[GG]{GG:Seifert}
V.L.  Ginzburg, B.Z. G\"urel, A $C^2$-smooth counterexample to the
Hamiltonian Seifert conjecture in $\R^4$, \emph{Ann.\ of Math. (2)},
\textbf{158} (2003), 
953--976.


\bibitem[G\"u]{Gu}
B.Z. G\"urel,
Totally non-coisotropic displacement and its applications to
Hamiltonian dynamics, \emph{Comm.\ Contemp.\ Math.}, \textbf{10}
(2008), 
1103--1128.

\bibitem[HZ]{HZ}
H. Hofer, E. Zehnder,
\emph{Symplectic Invariants and Hamiltonian Dynamics}, Birk\"auser
Verlag, Basel, 1994.

\bibitem[Hu]{Hu} M. Hutchings, Taubes's proof of the Weinstein
conjecture in dimension three, \emph{Bull.\ Amer.\ Math.\ Soc.\ (N.S.)}, \textbf{47} (2010),
73--125.

\bibitem[KH]{KH} 
A. Katok, B. Hasselblatt, 
\emph{Introduction to the Modern Theory of Dynamical Systems},
Encyc.\ of Mathematics and its Applications, \textbf{54}, Cambridge
Univ.\ Press, Cambridge, 1995.


\bibitem[McD]{McD} D. McDuff, Applications of convex integration to
symplectic and contact geometry, \emph{Ann.\ Inst.\ Fourier (Grenoble)},
\textbf{37} (1987), 
107--133.


\bibitem[Sc]{Sc} F. Schlenk, Applications of Hofer’s geometry to
Hamiltonian dynamics, \emph{Comment.\ Math.\ Helv.}, \textbf{81}
(2006), 105--121.

\bibitem[St]{St}
M. Struwe, Existence of periodic solutions of Hamiltonian
systems on almost every energy surfaces, \emph{Bol.\ Soc.\ Bras.\ Mat.},
\textbf{20} (1990), 49--58.


\bibitem[Su]{Su} D. Sullivan, Cycles for the dynamical study of
  foliated manifolds and complex manifolds, \emph{Invent.\ Math.},
  \textbf{36} (1976), 225--255.

\bibitem[Ta07]{Ta:We}
C.H. Taubes, 
The Seiberg--Witten equations and the Weinstein conjecture,
\emph{Geom.\ Topol.}, \textbf{11} (2007), 2117--2202. 

\bibitem[Ta09]{Ta:UE}
C.H. Taubes, An observation concerning uniquely ergodic vector fields
on 3-manifolds,  \emph{J. G\"okova Geom.\ Topol.\ GGT}, \textbf{3} (2009), 9--21. 

\bibitem[Vi]{Vi:We}
C. Viterbo, A proof of Weinstein's conjecture in $\R^{2n}$,
\emph{Ann.\ Inst.\ H. Poincar\'e Anal.\ Non Lin\'eaire}, \textbf{4}
(1987), 
337--356.


\bibitem[We]{We}
A. Weinstein, 
On the hypotheses of Rabinowitz' periodic orbit theorems,
\emph{J. Differential Equations}, \textbf{33} (1979), 
353--358. 

\end{thebibliography}
\end{document}